\newcommand{\leftQ}[2]{\left.\raisebox{-.2em}{$#2$}\middle\backslash\raisebox{.2em}{$#1$}\right.}
\newtheorem {theorem}{Theorem} [section]
\newtheorem {lemma} [theorem] {Lemma}
\newtheorem {proposition} [theorem] {Proposition}
\newtheorem {corollary} [theorem] {Corollary}
\newtheorem {definition} [theorem] {Definition}
\def\mc {\mathcal}
\def\Z {\mathbb Z}
\def\N {\mathbb N}
\def\co{\colon\thinspace}
\def\C {\mathbb C}
\def\H {\mathbb H}
\def\bS {\mathbb S}
\def\SO {\mathrm{SO}}
\def\immerse {\looparrowright}
\newcommand{\CAT} {\ensuremath{\operatorname{CAT}}}
\DeclareMathOperator{\arccosh}{arccosh}
\begin{document}

\title{Prescribed virtual homological torsion of 3-manifolds}

\author{Michelle Chu}
\author{Daniel Groves}

\address{Department of Mathematics, Statistics, and Computer Science,
University of Illinois at Chicago,
322 Science and Engineering Offices (M/C 249),
851 S. Morgan St.,
Chicago, IL 60607-7045}
\email{michu@uic.edu}
\email{groves@math.uic.edu}

\thanks{The first author is supported in part by NSF grant DMS 1803094 and DMS 1928930 while the author participated in a MSRI program during the Fall 2020 semester.  The second author is supported in part by NSF grant DMS 1904913.}

\begin{abstract}
We prove that given any finite abelian group $A$ and any irreducible $3$--manifold $M$ with empty or toroidal boundary which is not a graph manifold there exists a finite cover $M' \to M$ so that $A$ is a direct factor in $H_1(M',\Z)$.  This generalizes results of Sun \cite{Sun-VHT} and of Friedl--Herrmann \cite{FH}. 
\end{abstract}

\maketitle

\section{Introduction}
In \cite{Sun-VHT}, Sun showed that any closed hyperbolic $3$--manifold virtually contains any prescribed finite subgroup in homological torsion.
Sun used the immersed almost-Fuchsian surfaces of Kahn and Markovic \cite{KM12} to construct immersed $\pi_1$--injective $2$--complexes. By using Agol's result that the fundamental groups of closed hyperbolic $3$--manifolds are virtually compact special \cite{Agol} and the implications on virtual retractions to quasi-convex subgroups, for any closed hyperbolic $3$--manifold Sun \cite[Theorem 1.5]{Sun-VHT} finds a finite cover containing the prescribed finite abelian group as a direct factor in homology. 

Since the Kahn-Markovic construction requires that the manifolds be closed, Sun's results do not apply to hyperbolic $3$--manifolds with cusps. Indeed, Sun asked whether his result applied also to finite-volume hyperbolic 3-manifolds with cusps. In this paper, we extend the results of Sun to a larger class of $3$--manifolds which includes all finite-volume hyperbolic $3$--manifolds, giving a positive answer to \cite[Question 1.8]{Sun-VHT}.

\begin{theorem}\label{t:main}
Suppose that $M$ is an irreducible $3$--manifold with empty or toroidal boundary which is not a graph manifold and that $A$ is a finite abelian group. There is a finite cover $M' \to M$ so that $H_1(M';\mathbb{Z})$ has a direct factor isomorphic to $A$.
\end{theorem}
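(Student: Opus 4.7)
The plan is to follow Sun's strategy \cite{Sun-VHT}, adapting it to handle cusped hyperbolic pieces and mixed manifolds. The three main ingredients are: (i) a combinatorial construction of a 2--complex $X$ whose first homology contains $A$ as a direct factor; (ii) an immersion $X \immerse M$ that is $\pi_1$--injective and whose image subgroup is relatively quasi-convex in $\pi_1(M)$; and (iii) virtual specialness of $\pi_1(M)$ together with the resulting virtual retraction onto $\pi_1(X)$, which promotes the inclusion $H_1(X;\Z) \into H_1(M';\Z)$ in a finite cover to a split injection. Writing $A \cong \Z/n_1 \oplus \cdots \oplus \Z/n_k$, the combinatorial model $Y$ should be built from surfaces with boundary glued along common boundary curves with multiplicities $n_i$: when $n_i$ surface sheets are identified along a single curve one introduces a $\Z/n_i$ summand in $H_1(Y)$, and Mayer--Vietoris on disjoint building blocks yields the direct sum.

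To realize $Y$ inside $M$, I would first treat the case where $M$ is hyperbolic. In the closed case Kahn--Markovic \cite{KM12} already provides the required abundance of immersed quasi-Fuchsian surfaces with sufficiently flexible boundary behavior, and Sun's arguments apply verbatim. In the cusped case one must replace Kahn--Markovic by an analogous construction of closed (or peripherally-finite) quasi-Fuchsian surfaces in finite volume hyperbolic 3--manifolds; here I would rely on the Kahn--Wright type constructions (or the Cooper--Futer good pants machinery) to produce surfaces with prescribed boundary slopes and enough geometric control to be glued along common curves into a $\pi_1$--injective, relatively quasi-convex $2$--complex $X \immerse M$. For the general case where $M$ is not hyperbolic but contains a hyperbolic JSJ piece $N$ (guaranteed by the hypothesis that $M$ is not a graph manifold), I would construct $X$ inside $N$ so that $\pi_1(X) \le \pi_1(N) \le \pi_1(M)$; since the inclusion of a JSJ piece is $\pi_1$--injective and relatively quasi-convex in $\pi_1(M)$ (viewing $M$ as relatively hyperbolic relative to its graph-manifold pieces and tori), relative quasi-convexity of $\pi_1(X)$ in $\pi_1(N)$ transfers to relative quasi-convexity in $\pi_1(M)$.

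The third step invokes the virtual specialness results: $\pi_1(M)$ is virtually compact special (Agol \cite{Agol} in the closed hyperbolic case, Wise in the cusped hyperbolic case, Przytycki--Wise in the mixed case), and relatively quasi-convex subgroups of virtually (compact) special relatively hyperbolic groups are virtual retracts. Hence there is a finite cover $M' \to M$ together with a retraction $\rho\co \pi_1(M') \onto \pi_1(X)$. Abelianizing $\rho$ produces a retraction $H_1(M';\Z) \onto H_1(X;\Z)$, so $H_1(X;\Z)$ is a direct factor; since $A$ is a direct factor of $H_1(X;\Z)$ by construction, $A$ is a direct factor of $H_1(M';\Z)$.

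The main obstacle is step (ii) in the cusped and mixed settings: producing a family of quasi-Fuchsian (relative) surfaces with sufficient flexibility to be glued along chosen common curves while retaining $\pi_1$--injectivity and relative quasi-convexity of the resulting 2--complex. Sun's closed-hyperbolic argument uses the fine control afforded by the Kahn--Markovic machinery, and the technical core of the paper will be establishing a suitable analogue in the presence of cusps and of Seifert fibered JSJ pieces, together with verifying that the relative quasi-convexity survives the gluing and the passage across JSJ tori.
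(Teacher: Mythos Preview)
Your outline matches the paper's strategy closely: build $X_n$ with $H_1(X_n;\Z)$ containing $\Z/n\Z$ as a direct factor, immerse it $\pi_1$--injectively with relatively quasi-convex image using Kahn--Wright surfaces in a hyperbolic piece, and then invoke virtual specialness to get a virtual retraction. The paper handles general $A$ by an induction using the Klein--Maskit combination theorem to realize free products of the $\pi_1(X_{n_i})$ as a single relatively quasi-convex subgroup, rather than by building one composite $2$--complex as you suggest, but this is a minor packaging difference.

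The one place where your sketch is thinner than it looks is the mixed case. You write that relative quasi-convexity of $\pi_1(X)$ in $\pi_1(N)$ ``transfers'' to $\pi_1(M)$ by viewing $\pi_1(M)$ as relatively hyperbolic relative to its graph-manifold pieces, and that relatively quasi-convex subgroups of virtually special relatively hyperbolic groups are virtual retracts. Neither step is routine: the peripheral structure here is not abelian, and the virtual-retract statement for relatively quasi-convex subgroups requires producing a cocompact convex core in the cube complex, which is not automatic from relative quasi-convexity alone. The paper avoids this by working directly with the Przytycki--Wise cubulation: it associates to the hyperbolic piece $M_h$ a $\pi_1(M_h)$--invariant convex subcomplex on which the action is co-sparse (via Hruska--Wise), and then uses Sageev--Wise to find an $H$--invariant convex subcomplex which is $H$--\emph{cocompact} precisely because $H$ was constructed to miss all the cusp subgroups of $\pi_1(M_h)$. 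Haglund--Wise then yields the virtual retract. So the key technical point you should flag is not ``relative quasi-convexity survives across JSJ tori'' but rather ``$H$ is full (aparabolic) in $\pi_1(M_h)$, hence has a cocompact convex core in the ambient special cube complex.''
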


Prior to \cref{t:main}, Friedl and Herrmann used \cite{Sun-VHT} and a result of Hadari \cite{Hadari} to show that for any such $M$ and any $k>0$ there is finite cover $N \to M$ with $\left|H_1(N;\mathbb{Z})\right|>k$ \cite[Theorem 1.3]{FH}. Independently, Liu showed that any such $M$ admits a finite \emph{regular} cover $N' \to M$ with $\left|H_1(N';\mathbb{Z})\right|\neq0$ \cite[Corollary 1.4]{Liu}. 

A \emph{hybrid} hyperbolic manifold is constructed either by inbreeding (c.f. \cite{Agolhyb,BTsys}) or interbreeding (c.f. \cite{GPS}) arithmetic hyperbolic manifolds.
For $n>3$ every arithmetic hyperbolic $n$--manifold $N$ of simplest type contains a totally geodesic arithmetic hyperbolic $3$--manifold $M$ (coming from restrictions of the associated quadratic form). By \cite[\S9]{BHW}, we get the following corollary (some of these cases follow from \cite{Sun-VHT}).

\begin{corollary}
Suppose that $n>3$ and $N$ is a finite-volume hyperbolic $n$--manifold which is either arithmetic of simplest type or a hybrid. If $A$ is a finite abelian group then there is a finite cover $N'\to N$ so that $H^1(N';\mathbb{Z})$ has a direct factor isomorphic to $A$.
\end{corollary}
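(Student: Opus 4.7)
The strategy is to reduce the statement to an application of \cref{t:main} on a totally geodesic arithmetic hyperbolic $3$--submanifold of $N$, and then transport the homological information from this submanifold up to $N$ via a virtual retraction.

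First, I would locate a totally geodesic arithmetic hyperbolic $3$--submanifold $M \hookrightarrow N$. When $N$ is arithmetic of simplest type, $\pi_1(N)$ is commensurable with an arithmetic lattice in $\mathrm{O}(f)$ for a quadratic form $f$ of signature $(n,1)$ defined over a totally real number field. Restricting $f$ to a rational $4$--dimensional subspace of signature $(3,1)$ cuts out a totally geodesic copy of $\H^3$ inside $\H^n$, and after passing to an appropriate finite cover one obtains a finite-volume totally geodesic arithmetic hyperbolic $3$--manifold $M$ of simplest type embedded in (a finite cover of) $N$. When $N$ is a hybrid, the inbreeding or interbreeding construction provides totally geodesic submanifolds of every codimension between $1$ and $n-3$; iterating down to dimension $3$ produces such an $M$. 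In either case $M$ is irreducible with empty or toroidal boundary and is not a graph manifold, so \cref{t:main} applies and yields a finite cover $M' \to M$ in which $A$ is a direct factor of $H_1(M';\Z)$.

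To lift this to $N$, one invokes \cite[\S9]{BHW}, which in the simplest type and hybrid settings shows that $\pi_1(M)$ is a virtual retract of $\pi_1(N)$: there is a finite-index subgroup $G \le \pi_1(N)$ containing $\pi_1(M)$ together with a retraction $r \co G \onto \pi_1(M)$. Pulling $\pi_1(M') \le \pi_1(M)$ back through $r$ produces a finite-index subgroup $G' \le G$ and a retraction $G' \onto \pi_1(M')$, and $G'$ corresponds to a finite cover $N' \to N$ containing $M'$. The resulting split surjection $H_1(N';\Z) \onto H_1(M';\Z)$ exhibits $A$ as a direct factor of $H_1(N';\Z)$, and the stated cohomological conclusion follows by dualizing. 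The main obstacle is arranging that a virtual retraction exists already onto $\pi_1(M)$ itself (rather than merely onto some commensurable geodesic object of lower dimension); this is precisely the content of \cite[\S9]{BHW}, and it is why the corollary is confined to the arithmetic-of-simplest-type and hybrid classes where the requisite specialness of the cube complex dual to a sufficiently rich family of geodesic hypersurfaces is available.
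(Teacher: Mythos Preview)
Your proposal is correct and follows essentially the same approach as the paper, which disposes of the corollary in a single sentence (``By \cite[\S9]{BHW}, we get the following corollary'') after noting that such $N$ contain a totally geodesic arithmetic hyperbolic $3$--manifold $M$. You have simply unpacked that sentence: apply \cref{t:main} to $M$, invoke \cite[\S9]{BHW} for the virtual retraction of $\pi_1(N)$ onto $\pi_1(M)$, pull back along the retraction to the finite-index subgroup corresponding to $M'$, and read off the direct factor in homology.
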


The bulk of this paper is devoted to the case of \cref{t:main} where $M$ is a finite-volume hyperbolic $3$--manifold. We follow the strategy of \cite{Sun-VHT} but give an independent proof which simplifies and generalizes Sun's arguments, recovering Sun's results in the closed hyperbolic setting. We replace Sun's use of the results of Kahn and Markovic \cite{KM12} with those of Kahn and Wright \cite{KW} and replace some arguments of Sun with an elementary argument using coverings of surfaces. 
We begin in \cref{s:QI embeddings} by recording some facts about quasi-isometries and hyperbolic spaces. In \cref{s:KW surfaces} we apply the construction of Kahn-Wright to build an almost-Fuchsian surface in $M$. In \cref{ss:Xn} we use the Kahn-Wright surface to construct a 2-complex $X_n\immerse M$. 
We then apply virtual retraction properties to complete the proof of \cref{t:main} in the finite-volume hyperbolic case in \cref{s:main-hyp}. Finally, in \cref{s:mixed} we deduce the general case of \cref{t:main} from that of finite-volume hyperbolic  $3$--manifolds. 

We remark that independent from Kahn and Wright, Cooper and Futer \cite{CF} obtained similar results on constructing many closed immersed $\pi_1$--injective quasi-Fuchsian surfaces in finite-volume hyperbolic $3$--manifolds with cusps. However, our arguments rely on the additional control on the quasi-conformal constants and on the holonomies in the Kahn-Wright constructions.

\section{Quasi-isometric embeddings}\label{s:QI embeddings}
In this section we record some elementary facts about quasi-isometries and hyperbolic spaces.

\begin{definition}
Let $k, \lambda, c$ be constants, and let $X,Y$ be metric spaces.  A map $f \co X \to Y$ is a {\em $k$--local $(\lambda,c)$--quasi-isometric embedding} if for all $x \in X$ the restriction to the ball of radius $k$ 
\[	f |_{B_k(x)} \co B_k(x) \to Y	\]
is a $(\lambda,c)$--quasi-isometric embedding.
\end{definition}

The following is essentially \cite[Theorem A.20]{KW}.
\begin{proposition} \label{prop:local-to-global}
For all $\delta$, for all $c \ge 0$ and all $\lambda \ge 1$ there exist $k,\lambda',c'$ so that if $Y$ is a $\delta$--hyperbolic metric space and $X$ is a geodesic metric space then any $k$--local $(\lambda,c)$--quasi-isometric embedding is a $(\lambda',c')$--quasi-isometric embedding.
\end{proposition}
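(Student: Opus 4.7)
The plan is to reduce to the classical local-to-global theorem for local quasi-geodesics in $\delta$-hyperbolic spaces (as in Bridson--Haefliger III.H.1.13): there exist constants $L_0, \lambda', c'$, depending only on $\delta, \lambda, c$, so that any continuous $L_0$-local $(\lambda, c)$-quasi-geodesic in a $\delta$-hyperbolic space is a global $(\lambda', c')$-quasi-geodesic. I will choose $k$ large enough (in terms of $\delta, \lambda, c$) so that a suitable piecewise-geodesic interpolation of $f$ satisfies the hypothesis of this theorem.

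Given $x, x' \in X$, use that $X$ is geodesic to pick a geodesic $\gamma \co [0,D] \to X$ from $x$ to $x'$, with $D = d_X(x,x')$. Subdivide $\gamma$ at points $x = x_0, x_1, \dots, x_n = x'$ with $d_X(x_i, x_{i+1}) \le k/4$, and let $\widetilde{\gamma}$ be the continuous path in $Y$ obtained by concatenating $Y$-geodesic segments from $f(x_i)$ to $f(x_{i+1})$. The upper bound is then immediate from the triangle inequality and the $(\lambda,c)$-upper estimate on each piece: one gets $d_Y(f(x), f(x')) \le \lambda D + c_0$ for some constant $c_0$ depending only on $\lambda, c, k$.

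For the lower bound, I claim $\widetilde{\gamma}$ is an $L_0$-local $(\lambda, c + 2k)$-quasi-geodesic, provided $k$ is large. Any two points $p, q$ on $\widetilde{\gamma}$ within arclength $L_0$ of each other lie on segments containing $f(x_i), f(x_j)$ with $j - i$ bounded by a function of $L_0/k$; since each consecutive pair has $d_X(x_i, x_{i+1}) \le k/4$, choosing $k \ge 4 L_0$ forces $d_X(x_i, x_j) \le k$, so the $k$-local $(\lambda,c)$-hypothesis applies directly to this pair. A routine estimate (triangle inequality, plus two $k/4$ corrections from $p$ to $f(x_i)$ and from $q$ to $f(x_j)$) then yields the required local quasi-geodesic bound. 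Applying the classical theorem to $\widetilde{\gamma}$, it is a global $(\lambda', c')$-quasi-geodesic in $Y$, and in particular $d_Y(f(x), f(x')) \ge D/\lambda' - c'$, which combined with the upper bound completes the proof.

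The essential difficulty, and the source of most of the bookkeeping, is that the choice of $k$ must simultaneously: (a) make subdivisions short enough that consecutive $x_i$ lie inside a common $k$-ball on which $f$ is a genuine $(\lambda,c)$-quasi-isometric embedding; and (b) make the local scale $L_0 \lesssim k/4$ of $\widetilde{\gamma}$ exceed the $\delta$-dependent threshold at which local quasi-geodesics become global ones. Both are achieved by taking $k$ sufficiently large in terms of $\delta, \lambda, c$, which is precisely the quantification in the statement.
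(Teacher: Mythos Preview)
Your approach is the same as the paper's: reduce to the standard local-to-global theorem for quasi-geodesics in $\delta$--hyperbolic spaces (the paper cites \cite[Theorem~3.1.4]{cdp} rather than Bridson--Haefliger, but it is the same result). The paper's proof is in fact a single sentence, because the interpolation step you carry out is unnecessary: for any geodesic $\gamma\co[0,D]\to X$, the composition $f\circ\gamma$ is already a $k$--local $(\lambda,c)$--quasi-geodesic in $Y$ (if $|s-t|\le k$ then $\gamma(s),\gamma(t)\in B_k(\gamma(s))$, so the hypothesis applies directly), and one simply takes $k$ larger than the threshold $L_0=L_0(\delta,\lambda,c)$.

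This shortcut also sidesteps a small circularity in your bookkeeping: you fix $L_0$ depending on $(\delta,\lambda,c)$ at the outset, but then claim $\widetilde\gamma$ is an $L_0$--local $(\lambda,c+2k)$--quasi-geodesic; the threshold for those constants is not the $L_0$ you chose, and since your additive constant grows with $k$ you cannot close the loop as written. Applying the theorem directly to $f\circ\gamma$ keeps the local constants at $(\lambda,c)$ and avoids the issue entirely.
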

\begin{proof}
Since $X$ and $Y$ are geodesic metric spaces, distances in $X$ and $Y$ are calculated by geodesics.  Therefore, we can apply the standard local-to-global result for quasi-geodesics (see, for example, \cite[Theorem 3.1.4, p.25]{cdp}).
\end{proof}

\subsection{Half-planes}

Let $\theta \in (0,\pi]$.  Let $P_\theta$ be the subspace of $\H^3$ obtained from gluing two totally geodesic half-planes together along their boundary geodesic, meeting at angle $\theta$.  There is a natural embedding $p_\theta \co \H^2 \to \H^3$ taking $\H^2$ to $P_\theta$ given by mapping the imaginary axis to the boundary geodesic of the two half-planes (we consider $\H^2$ in the upper half-space model as a subset of $\C^2$).  The image of these boundary geodesics is the \emph{pleating locus} for $p_\theta$.

\begin{lemma} \label{lem:glue half-planes}
Given $\theta \in (0,\pi]$ there exists $c_\theta \ge 0$ so that for all $\eta \in [\theta,\pi]$ the map $p_{\eta}$ is a $(1,c_\theta)$--quasi-isometric embedding.
\end{lemma}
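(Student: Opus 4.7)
The plan is to use that $p_\eta$ restricted to either closed half of $\H^2$ (on one side of the imaginary axis) is an isometric embedding onto one of the two totally geodesic half-planes comprising $P_\eta$. This immediately yields the upper bound $d_{\H^3}(p_\eta(x), p_\eta(y)) \le d_{\H^2}(x, y)$ for all $x, y$, and in fact gives equality whenever $x$ and $y$ lie on the same side of the imaginary axis. So the only non-trivial case is when $x$ and $y$ lie on opposite sides.

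In that case, let $z$ be the unique point where the $\H^2$-geodesic from $x$ to $y$ meets the imaginary axis, and set $\ell_1 = d_{\H^2}(x,z)$ and $\ell_2 = d_{\H^2}(z,y)$. Under $p_\eta$, the broken path $x \to z \to y$ maps to two geodesic segments of total length $\ell_1 + \ell_2 = d_{\H^2}(x,y)$, meeting at $p_\eta(z)$ on the pleating locus. It remains to show that $(\ell_1 + \ell_2) - d_{\H^3}(p_\eta(x), p_\eta(y))$ is bounded by a constant depending only on $\theta$.

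For this, I would first compute the interior angle $\beta$ of the broken geodesic at $p_\eta(z)$. In the upper half-space model, placing the pleating locus as the vertical geodesic above the origin and the two half-planes as Euclidean half-planes meeting at dihedral angle $\eta$, a direct tangent-vector calculation gives the identity
\[
1 + \cos\beta \;=\; \sin^2 \alpha\,(1 + \cos \eta),
\]
where $\alpha \in (0, \pi/2]$ is the acute angle at which the $\H^2$-geodesic crosses the imaginary axis. In particular $\cos \beta \le \cos \eta$, so $\beta \ge \eta \ge \theta$, with equality precisely at the perpendicular crossing.

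Finally, the hyperbolic law of cosines applied to the triangle with vertices $p_\eta(x), p_\eta(z), p_\eta(y)$ in $\H^3$ gives
\[
\cosh d_{\H^3}(p_\eta(x), p_\eta(y)) \;=\; \cosh \ell_1 \cosh \ell_2 - \sinh \ell_1 \sinh \ell_2 \cos \beta \;\ge\; \tfrac{1 - \cos \beta}{2}\,\cosh(\ell_1 + \ell_2).
\]
Combining the elementary bounds $\cosh^{-1}(y) \ge \log y$ (for $y \ge 1$) and $\log \cosh s \ge s - \log 2$ then yields $(\ell_1 + \ell_2) - d_{\H^3}(p_\eta(x), p_\eta(y)) \le \log\bigl(4/(1 - \cos \beta)\bigr) \le \log\bigl(4/(1 - \cos \theta)\bigr)$, giving an admissible $c_\theta$ (with a small adjustment in the short-distance regime where the bound $\cosh d \ge \tfrac{1-\cos\beta}{2}\cosh(\ell_1+\ell_2)$ does not immediately exceed $1$). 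The only step requiring real care is the angle identity $\beta \ge \eta$; once this geometric input is in place, the remainder is routine hyperbolic trigonometry.
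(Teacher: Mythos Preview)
Your argument is correct and follows the same overall strategy as the paper: reduce to the case where $x,y$ lie on opposite sides of the imaginary axis, let $z$ be the crossing point, observe that the bending angle at $p_\eta(z)$ is at least $\eta\ge\theta$, and finish with hyperbolic trigonometry on the triangle $p_\eta(x)\,p_\eta(z)\,p_\eta(y)$.

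Two tactical differences are worth noting. First, the paper simply asserts the angle inequality $\beta\ge\eta$, whereas you actually prove it via the identity $1+\cos\beta=\sin^2\alpha\,(1+\cos\eta)$; this is a genuine addition. Second, for the endgame the paper drops a perpendicular from $p_\eta(z)$ to the geodesic $[p_\eta(x),p_\eta(y)]$, bounds its length $e$ using the second hyperbolic law of cosines (obtaining $\cosh e\le 1/\sin(\theta/2)$), and then uses the triangle inequality to get $d_1+d_2-2e\le d_{\H^3}(p_\eta(x),p_\eta(y))$, yielding $c_\theta=2\arccosh(1/\sin(\theta/2))$. Your direct use of the first law of cosines together with the estimate $\cosh\ell_1\cosh\ell_2-\sinh\ell_1\sinh\ell_2\cos\beta\ge\tfrac{1-\cos\beta}{2}\cosh(\ell_1+\ell_2)$ is an equally clean alternative and in fact gives a slightly smaller constant $c_\theta=\log\bigl(2/\sin^2(\theta/2)\bigr)$.
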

\begin{proof}
We show that it suffices to take 
$$c_\theta = 2 \cdot \arccosh \left( \frac{1}{\sin\left(\frac{\theta}{2}\right)} \right)	.	$$
Indeed, suppose that $x,y \in \H^2$ and let $\overline{x} = p_\eta(x)$ and $\overline{y} = p_\eta(y)$, and consider the image of $[x,y]$ in $p_\eta(\H^2)$.  If the sign of the real parts of $x$ and $y$ are the same, then $[x,y]$ maps to a geodesic in $\H^3$ and $d_{\H^3}(\overline{x},\overline{y}) = d_{\H^2}(x,y)$ in this case.

Suppose then that the signs of the real parts of $x$ and $y$ are different, and let $z\in \H^2$ be the point where $[x,y]$ meets the $y$--axis.  Let $\overline{z} = p_\eta(z)$.  Then $p_\eta([x,y])$ consists of two geodesic segments $[\overline{x},\overline{z}]$ and $[\overline{z},\overline{y}]$ meeting at some angle $\alpha \ge \eta \ge \theta$.

Consider the geodesic triangle $\Delta$ in $\H^3$ with vertices $\overline{x},\overline{y},\overline{z}$, and let $e$ be the distance from $\overline{z}$ to the geodesic $\gamma = [\overline{x},\overline{y}]$.  The geodesic from $\overline{z}$ to $\gamma$ cuts $\Delta$ into two right-angled hyperbolic triangles, one of which has angle at $\overline{z}$ at least $\frac{\theta}{2}$.  We thus have a hyperbolic triangle with side lengths $e,a,b$, say, where the angle opposite $b$ is $\frac{\pi}{2}$, and the angle opposite $a$ is at $\overline{z}$ and is $A \ge \frac{\theta}{2}$.  Let $E$ be the angle opposite the side of length $e$.

\begin{figure}
  \begin{center}
  \fontsize{10pt}{10pt}\selectfont
  \def\svgwidth{0.50\textwidth}
   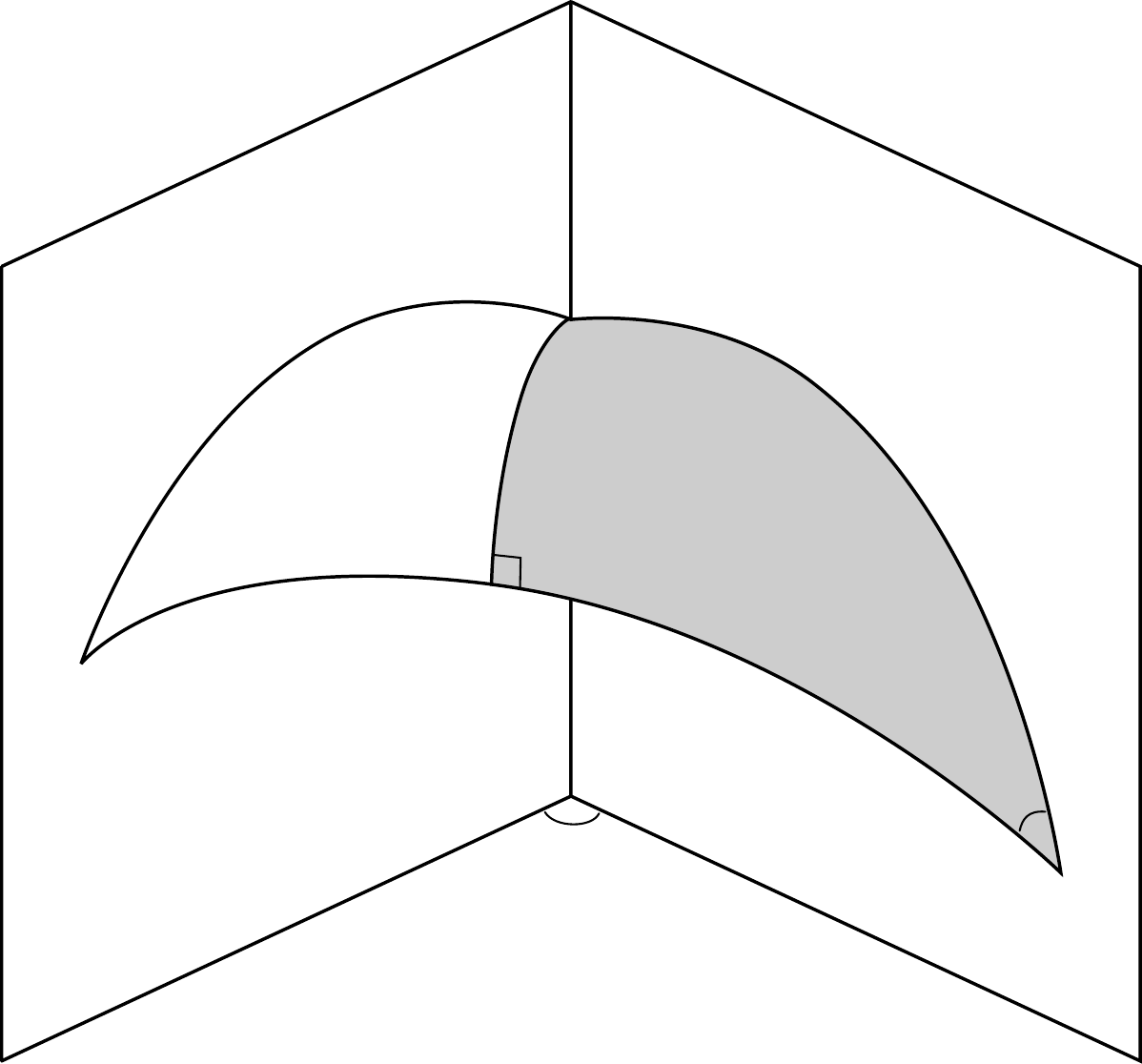
    \caption{The proof of \cref{lem:glue half-planes}}
\end{center}
\end{figure}

The second hyperbolic law of cosines says
\[	\cos(E) = -\cos(A)\cos\left(\frac{\pi}{2}\right) + \sin(A)\sin\left(\frac{\pi}{2}\right) \cosh(e)	,	\]
so
\[	\cosh(e) =  \frac{\cos(E)}{\sin(A)} \le \frac{1}{\sin\left( \frac{\theta}{2}\right)} .	\]
Let $d_1 = d_{\H^3}(\overline{x},\overline{z})$ and $d_2 = d_{\H^3}(\overline{z},\overline{y})$.  Observe that $d_{\H^2}(x,y) = d_1 + d_2$.  It is clear that
\[	d_1+d_2 -2 \arccosh \left( \frac{1}{\sin\left( \frac{\theta}{2}\right)}\right)  \le d_{\H^3}(\overline{x},\overline{y}) \le d_1 + d_2	,	\]
and the result follows.
\end{proof}

\section{Kahn-Wright surfaces}\label{s:KW surfaces}
From this section until the end of \cref{s:main-hyp}, let $N$ be a finite-volume hyperbolic $3$--manifold.  We remark that the arguments work in the closed setting as well as in the cusped setting, hence recovering Sun's results.

The set of closed geodesics in $N=\H^3/\Gamma$ is in 1-to-1 correspondence with the set of conjugacy classes of loxodromic elements in $\Gamma$. For a closed geodesic $\alpha$ in $N$ (with corresponding conjugacy class $[\gamma]\subset\Gamma$) let $\ell(\alpha)$ denote the length of $\alpha$ (the translation length $\gamma$) and $\theta(\alpha)$ the holonomy class of $\alpha$ (the rotation angle of $\gamma$ around its axis). 

\subsection{Pre-good curves}
Later in the section, we give a brief discussion of the construction of surfaces due to Kahn and Wright in \cite{KW}.  However, we first give a lemma which proves the existence of certain well-behaved geodesics whose $n^{\mathrm{th}}$ powers will become part of the Kahn--Wright surface.  
See \cite[$\S3$]{KW} for the definition of height in the following statement.  The following is an analogue in the finite-volume case of Sun's \cite[Lemma 2.9]{Sun-VHT}.  In order to use this geodesic in Kahn and Wright's construction, it is important to control the height.

\begin{lemma}\label{lem:geodesic}
For $n\in\N, \epsilon>0$, $h>0$, there exists $R_0$ so that for all $R > R_0$ there exists a geodesic $\alpha_0$ in $N$ of height at most $h$ such that
$\left| \ell(\alpha_0)-\frac{2R}{n} \right|<\frac{\epsilon}{n}$ and $\left| \theta(\alpha_0)-\frac{2\pi}{n} \right|<\frac{\epsilon}{n}$.
\end{lemma}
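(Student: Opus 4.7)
The plan is to adapt the argument of Sun \cite[Lemma 2.9]{Sun-VHT}, which establishes the analogous statement for closed hyperbolic $3$--manifolds via mixing of the frame flow, to the finite-volume setting with the additional requirement that the produced geodesic avoid the cusps above height $h$. Let $F(N)$ denote the oriented orthonormal frame bundle of $N$ and $\phi_t$ the frame flow, which preserves a natural finite Liouville measure $\mu$ and is mixing (classical; for lattices in $\SL$ this follows from Howe--Moore decay of matrix coefficients). Fix $h' \in (0,h)$ and let $K = N_{\leq h'}$ denote the complement of the open horoball neighborhoods of the cusps of height exceeding $h'$; then $K$ is compact and $F(K) \subset F(N)$ has positive finite $\mu$--measure. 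Let $R_\theta \co F(N) \to F(N)$ denote rotation of each frame by angle $\theta = 2\pi/n$ about its first leg; this commutes with $\phi_t$.

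First I would produce a long almost-closed orbit in $F(K)$. By Birkhoff's ergodic theorem, a generic frame $f_0 \in F(K)$ has forward trajectory spending positive density of time inside $F(K)$. By mixing, for any open neighborhood $U \subset F(K)$ of such an $f_0$,
\[
\mu\bigl(\phi_T(U) \cap R_\theta(U)\bigr) \longrightarrow \frac{\mu(U)^2}{\mu(F(N))} \quad\text{as } T \to \infty.
\]
Hence for every $\rho > 0$ and every sufficiently large $R$, there exist $T$ with $|T - 2R/n| < \rho$ and $f \in U$ with $\phi_T(f) \in R_\theta(U)$, so $d\bigl(\phi_T(f), R_\theta(f)\bigr) < \rho'$, where $\rho'$ shrinks with the diameter of $U$. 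By choosing $f_0$ with the Birkhoff property above and refining $U$ appropriately, one arranges that the entire orbit segment $\{\phi_t(f) : t \in [0,T]\}$ remains in $F(K)$.

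Next I would apply the Anosov closing lemma for the geodesic flow on the negatively curved manifold $N$, with rotational data tracked via frames (restricting to the compact thick part in which the near-orbit lives, so that uniform hyperbolicity constants apply). This produces a genuine closed $\phi_t$--orbit shadowing the near-orbit; projecting to $N$ yields a closed geodesic $\alpha_0$ with $|\ell(\alpha_0) - T| = O(\rho')$ and $|\theta(\alpha_0) - \theta| = O(\rho')$, with implicit constants depending only on the hyperbolicity of $\H^3$. Taking $\rho$ and $\rho'$ sufficiently small in terms of $\epsilon/n$ gives the required length and holonomy bounds. Since the shadowing distance is also $O(\rho')$, the lift of $\alpha_0$ stays in a small neighborhood of the near-orbit in $F(N)$, and hence $\alpha_0$ has height at most $h$ provided the cushion $h - h'$ exceeds the shadowing distance.

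The main obstacle is ensuring that the \emph{entire} near-closed orbit, of length on the order of $2R/n$, stays below height $h$, rather than merely starting and ending there. This does not follow from mixing alone, and is the point where the cusped case genuinely differs from the closed one. Addressing it requires an additional ergodic-theoretic ingredient (such as the Birkhoff-genericity of $f_0$, or the exponential-in-$h'$ smallness of the $\mu$--measure of the thin part), together with a careful quantitative choice of $U$ depending on $\epsilon/n$ and $h - h'$. Once this confinement is arranged, the remainder of the argument closely parallels Sun's closed case.
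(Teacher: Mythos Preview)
Your approach via mixing of the frame flow and the Anosov closing lemma is genuinely different from the paper's, which is a pure counting argument: the paper invokes the equidistribution of holonomies of closed geodesics (the asymptotic $\#\mathcal{G}(T,\Omega)\sim \frac{e^{2T}}{2T}\|\Omega\|$, citing \cite{MMO14,GangolliWarner}) to show that there are exponentially many closed geodesics with length and holonomy in the required windows, and then appeals to the proof of \cite[Lemma~3.1]{KW} to see that the proportion of these whose height exceeds $h$ tends to zero as $R\to\infty$. No closing lemma and no explicit orbit construction is used.

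You correctly locate the obstacle, but the fixes you propose do not close it. For fixed $h'<h$ the region above height $h'$ has \emph{positive} $\mu$--measure, so by ergodicity the forward orbit of a Birkhoff-generic frame spends a fixed positive density of time there; consequently the segment $\{\phi_t(f):t\in[0,T]\}$ of such an $f$ cannot remain in $F(K)$ once $T$ is large, regardless of how $U$ is refined. The exponential-in-$h'$ smallness of the thin part only bounds this density by a fixed small constant (since $h$ is fixed in advance), not by something vanishing with $T$; indeed the $\mu$--measure of starting frames whose length--$T$ orbit segment avoids the region above height $h'$ tends to $0$ as $T\to\infty$. Thus a construction based on the behaviour of $\mu$--typical orbits cannot produce the required near-orbit, and without it the closing lemma has nothing to close. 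The geodesics you seek do exist, but accessing them seems to require a counting or equidistribution statement for closed geodesics themselves, which is precisely the route the paper takes.
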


\begin{proof}
For a closed subset $\Omega$ of $\SO(2)$ and $T > 0$, let
$$\mc{G}(T,\Omega)=\left\{\alpha : \alpha \text{ is a closed geodesic in } N, \ell(\alpha)\leq T, \theta(\alpha)\in\Omega \right\} . $$

As noted in \cite[\S 3.1]{KW}, an application of the Margulis argument shows that
\begin{equation}
\#\mc{G}(T,\Omega) \sim \frac{e^{2T}}{2T} \|\Omega\| \text{ as }T\rightarrow\infty
\end{equation}
which in this case follows, for example, from \cite[Theorem 1.1]{MMO14} by setting $\varphi:=1_{\Omega}$ the indicator function on $\SO(2)$ (see also \cite{GangolliWarner}).

Considering geodesics $\alpha\in \mc{G}(2R/n+\epsilon/n,\Omega)\setminus \mc{G}(2R/n-\epsilon/n,\Omega)$ where $\Omega$ is the interval $(\frac{2\pi}{n}-\frac{\epsilon}{n},\frac{2\pi}{n}+\frac{\epsilon}{n})$,
we have 
\begin{equation}
\#\left\{ \alpha : \left| \ell(\alpha_0)-\frac{2R}{n} \right|<\frac{\epsilon}{n} \text{ and } \left| \theta(\alpha_0)-\frac{2\pi}{n} \right|<\frac{\epsilon}{n} \right\} \sim c_\epsilon\frac{e^{4R}}{4R} .
\end{equation}

The arguments in the proof of \cite[Lemma 3.1]{KW} apply to show that as $R$ grows, the proportion of those $\alpha$ with height larger than $h$ shrinks. In particular, for sufficiently large $R$ one can find $\alpha_0$ as needed. 
\end{proof}

Note that $\alpha_0$ may be chosen to be primitive.   In the language of Kahn and Wright, $\alpha_0^n$ is an \emph{$(R,\epsilon)$--good curve}.

\begin{definition}
Fix $n \in \N$, and also $R, \epsilon$.  An $(R,\epsilon,n)$--pre-good curve in $N$ is a geodesic $\alpha_0$ satisfying the conclusion of \cref{lem:geodesic} for some $h \in (0,1)$.
\end{definition}
We remark that Kahn and Wright allow curves to have height at most $50\log(R)$ before needing to be ``cut-off''.  We assume that $R > e$ so certainly curves of height less than $1$ are fine.  \cref{lem:geodesic} asserts that for fixed $n$ and $\epsilon$, for large enough $R$ there exists an $(R,\epsilon,n)$--pre-good curve (in fact there are many).

\subsection{The construction of Kahn and Wright}

In \cite{KW}, Kahn and Wright build certain quasi-Fuchsian immersed surfaces in $N$ out of pieces called \emph{good pants} and \emph{umbrellas}.  In turn, the umbrellas are assembled out of \emph{good hamster wheels}.  Each good pant and good hamster wheel is immersed in $N$, and has geodesic boundary components, which are referred to as \emph{cuffs}.

The construction in \cite{KW} depends on choices of parameters $R$ (sufficiently large) and $\epsilon > 0$ (sufficiently small).  We postpone for now the choice of these parameters to discuss the construction.  Kahn and Wright also specify another pair of parameters called \emph{cutoff heights}, and the purpose of \cref{lem:geodesic} above is to ensure that we can find an $\alpha_0$ whose height stays below the cutoff heights, and whose $n^{\mathrm{th}}$ power is a good curve.

Suppose that we find a curve $\alpha_0$ as in \cref{lem:geodesic} so that $\alpha = \alpha_0^n$ is an $(R,\epsilon)$--good curve, and so that the height of $\alpha_0$ (and hence $\alpha$) is at most $1$.  Then Kahn and Wright build a surface $S$ out of good pants and good hamster wheels, and $\alpha$ appears as a cuff on at least two (in fact many) of these pieces.

Consider $\alpha_0 \co \bS^1 \to N$ as a map from the circle to $N$ parametrized proportional to arc length, and let $u = \alpha_0(1)$.  Let $\phi_n \co \bS^1 \to \bS^1$ be the connected $n$--fold covering map, and let $\alpha \co \bS^1 \to N$ be the composition $\alpha_0 \circ \phi_n$.
Suppose $f \co S_0 \immerse N$ is a Kahn--Wright surface and that there is a map $c \co \bS^1 \to S_0$ so $\alpha = f \circ c$.  Let $\phi_n^{-1}(u) = \{ u_1, \ldots , u_n \}$ and note that there are $n$ different points $\{ x_1, \ldots , x_n \}$ on $S_0$ so $f(x_i) = \alpha(u_i)$ for each $i$.

Choose a basepoint $y \in \H^3$ and let $\pi \co  (\H^3,y) \to (N,u)$ be the based universal covering map.  Fix a basepoint $z \in \H^2$, and for each $i$, let $\tau_i \co (\H^2, z) \to (S_0,x_i)$ be a based universal cover.  

The map $f$  elevates to $n$ distinct (based) maps:
\[	\widetilde{f_i} \co (\H^2,z) \to (\H^3,y)	\]	
so that for each $i$ we have $\widetilde{f_i} \circ \pi = \tau_i \circ f$.

Let $H_R  = \{ z \in \H^2 \mid \rm{Re}(z) \ge 0\}$ and $H_L = \{ z \in \H^2 \mid \rm{Re}(z) \le 0 \}$, and let $m= \{ z \in \H^2 \mid \rm{Re}(z)  = 0 \} = H_R \cap H_L$.

Now, for a pair $i \ne j$ from $\{ 1 ,\ldots , n \}$ we have $\widetilde{f_i}(m) = \widetilde{f_j}(m)$.  Thus, we can take the two maps $\widetilde{f_i}|_{H_R}$ and $\widetilde{f_j}|_{H_R}$ and glue them together via an orientation-preserving isometry along the boundary to get a continuous map $\widetilde{f}_{i,f}^{H_R} \co \H^2 \to \H^3$, and similarly for the two maps restricted to $H_L$ to get a continuous map $\widetilde{f}_{i,j}^{H_L}$.

Kahn and Wright prove that for appropriate choices of parameters, their surface, built as a \emph{good assembly} of pants and hamster wheels, is close to a {\em perfect assembly}, and that the map which takes the good assembly to the perfect assembly is \emph{compliant} (see \cite[$\S$A.5]{KW}), which in particular means that it takes cuffs to cuffs.  For a perfect assembly with cuff $\alpha$, the construction analogous to the $\widetilde{f}_{i,j}$ leads to pairs of totally geodesic half-planes glued along their boundary geodesic, namely to a map $p_\theta$ for some $\theta$.  Thus, the map that takes the good assembly to the perfect assembly induces a map between $\widetilde{f}_{i,j}^{H_R} \co \H^2 \to \H^3$ and some map $p_\theta \co \H^2 \to \H^3$, and this map takes $m$ to the pleating locus for $p_\theta$.  

Our first task is to bound $\theta$ away from $0$, and our second is to show that the two maps are close.  The sense in which they are close will be that of \cite[p.51]{KW} -- being of \emph{$\epsilon_0$--bounded distortion to distance $D$} for appropriate choice of $\epsilon_0$ and $D$.

Denote the angles of the maps $p_\theta$ induced by $i$, $j$ and $H_R$ by $\theta(i,j,H_R)$, and for $i$, $j$ and $H_L$ by $\theta(i,j,H_L)$.

The following is a summary of the above discussion, and also of \cite[Theorem A.18]{KW}. Note that it follows from the proof of \cite[Theorem A.18]{KW} that the chosen maps which we denote by $g_{i,j}^{H_R}$ and $g_{i,j}^{H_K}$ in the following statement are compliant. In the following statement, $R_0$ is the constant from the statement of \cref{lem:geodesic}.

\begin{theorem} \label{t:KW summary}
Fix $n \in \N$.
For all $D$ there exist $C$, $\epsilon_0$ and $R_1 > R_0$ so that for all $\epsilon \in (0,\epsilon_0)$ and all $R > R_1$ and any $(n,R,\epsilon)$--pre-good curve $\alpha_0$ there exists a Kahn-Wright surface $f \co S_0 \immerse N$ containing $\alpha = \alpha_0^n$ as a cuff, constructed as an $(R,\epsilon)$--good assembly.

For each $i,j \in \{ 1,\ldots, n \}$ with $i \ne j$, there are maps $g_{i,j}^{H_R}, g_{i,j}^{H_L} \co \H^3 \to \H^3$ so that 
$\widetilde{f}_{i,j}^{H_R} \circ g_{i,j}^{H_R} = p_{\theta(i,j,H_R)}$ and $\widetilde{f}_{i,j}^{H_L} \circ g_{i,j}^{H_L} = p_{\theta(i,j,H_L)}$.
Moreover, we have:
\[	\theta(i,j,H_R), \theta(i,j,H_L) \in \left(\frac{\pi}{n},\pi\right)		,	\]
and $g_{i,j}^{H_R}$ is a $D$--local $(1+C\epsilon,C\epsilon)$--quasi-isometry.
\end{theorem}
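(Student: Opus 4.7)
The plan is to split the argument into three steps: build the surface using the Kahn-Wright construction, extract the comparison maps from the Kahn-Wright compliance theorem, and compute the pleating angles using the $n$--fold cover structure relating $\alpha$ to $\alpha_0$. For the construction, note that $\alpha = \alpha_0^n$ has length close to $2R$ and holonomy close to $0 \pmod{2\pi}$, and its height is at most $1$, well below the Kahn-Wright cutoff $50\log R$. Thus $\alpha$ is an $(R,\epsilon)$--good curve in the sense of \cite{KW}. For $R_1 \ge R_0$ sufficiently large and $\epsilon_0$ sufficiently small, the Kahn-Wright machine produces an $(R,\epsilon)$--good assembly $f \co S_0 \immerse N$ with $\alpha$ appearing as a cuff on at least two of its pieces.

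To extract the maps $g_{i,j}^{H_R}, g_{i,j}^{H_L}$ and verify their quasi-isometry properties, I would invoke \cite[Theorem A.18]{KW}, which provides a compliant comparison between the good and perfect assemblies sending cuffs to cuffs and having $\epsilon_0$--bounded distortion to distance $D$, for any prescribed $D$ and sufficiently small $\epsilon_0 = \epsilon_0(D, n)$. In the perfect assembly every piece is totally geodesic, so the construction analogous to $\widetilde{f}_{i,j}^{H_R}$ is exactly a pleated map $p_{\theta(i,j,H_R)}$ for some $\theta(i,j,H_R) \in (0,\pi]$ determined by how the two perfect pieces meet along the cuff. Passing to universal covers, the compliant comparison produces $g_{i,j}^{H_R} \co \H^3 \to \H^3$ satisfying the stated composition identity with $\widetilde{f}_{i,j}^{H_R}$ and $p_{\theta(i,j,H_R)}$, and the $D$--local $(1+C\epsilon, C\epsilon)$--quasi-isometry property transfers directly from the bounded-distortion estimate for a suitable $C$. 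The discussion for $H_L$ is identical.

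The only new content beyond \cite{KW} is the pleating-angle bound. In the perfect model the cuff lifts to the axis $A$ of the primitive loxodromic $\gamma_0 \in \Gamma$ corresponding to $\alpha_0$, and the points $x_1, \ldots, x_n$ on $c(\bS^1) \subset S_0$ are equally spaced along the cuff, with each two consecutive of them separated by one full traversal of $\alpha_0$ in $N$. Consequently the elevations $\widetilde{f}_i$ and $\widetilde{f}_j$ differ in the perfect model by the action of $\gamma_0^{j-i}$, which restricts near $A$ to a rotation about $A$ by angle $(j-i)\theta(\alpha_0) \pmod{2\pi}$. Hence $\theta(i,j,H_R)$ is the representative in $(0,\pi]$ of $(j-i) \theta(\alpha_0) \pmod{2\pi}$. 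Since $|\theta(\alpha_0) - 2\pi/n| < \epsilon/n$, writing $k := |j-i| \in \{1, \ldots, n-1\}$, this representative equals $\min(k, n-k) \cdot 2\pi/n$ up to an error of size at most $\epsilon$, hence is at least $2\pi/n - \epsilon > \pi/n$ once $\epsilon_0$ is small. The strict upper bound $\theta < \pi$ follows similarly; the only borderline case is $n$ even with $k = n/2$, which is excluded by choosing $\alpha_0$ generically among the abundance of pre-good candidates afforded by \cref{lem:geodesic}. The main technical hazard in the plan is keeping the universal-cover bookkeeping consistent: distinct elevations $\widetilde{f}_i$ and $\widetilde{f}_j$ are related by $\gamma_0^{j-i}$ and not by the full deck transformation $\gamma_0^n$ of the cuff $\alpha$; once that observation is set up the rest is elementary.
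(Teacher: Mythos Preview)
Your proposal matches the paper's treatment: the paper does not give a standalone proof but presents the theorem as ``a summary of the above discussion, and also of \cite[Theorem~A.18]{KW}.'' The existence of the surface containing $\alpha$ as a cuff and the compliant comparison maps with the $D$--local $(1+C\epsilon, C\epsilon)$--quasi-isometry bound are drawn directly from Kahn--Wright, exactly as you indicate. Your explicit computation of the pleating angle---that the elevations $\widetilde f_i$ and $\widetilde f_j$ differ in the perfect model by the rotation of angle $(j-i)\,\theta(\alpha_0)$ about the common axis, giving $\theta(i,j,\cdot) \approx \min(k,n-k)\cdot 2\pi/n$ for $k=|j-i|$---spells out the implicit reasoning behind the lower bound $\theta > \pi/n$, and is correct.

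One small wrinkle: your fix for the strict upper bound $\theta < \pi$ (choosing $\alpha_0$ generically) is incompatible with the universal quantifier in the statement, which asserts the conclusion for \emph{any} pre-good curve. When $n$ is even, $|j-i| = n/2$, and $\theta(\alpha_0)$ happens to equal $2\pi/n$ exactly, the perfect-model angle is $\pi$. This is harmless for the application: \cref{lem:glue half-planes} is stated for $\theta \in (0,\pi]$, and \cref{c:local QI} only uses the lower bound. The strict inequality on the right is a minor overstatement in the paper rather than a defect in your argument.
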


The following is an easy consequence of \cref{t:KW summary} and \cref{lem:glue half-planes}.  In the following statement $R_0$ is the constant from \cref{lem:geodesic} and $R_1$ is the constant from \cref{t:KW summary}.
\begin{corollary} \label{c:local QI}
Fix $n \in \N$.  There exist $\lambda,\kappa$ so that for any $D$ there exist  $R_D > R_1, R_0$ and $\epsilon_D > 0$, so that for any $\alpha_0$ and $f \co S_0 \immerse N$ as in \cref{t:KW summary} with $R > R_D$ and any $\epsilon \in (0,\epsilon_D)$ the maps $\widetilde{f}_{i,j}^{H_R}$ and $\widetilde{f}_{i,j}^{H_L}$ are $D$--local $(\lambda,\kappa)$--quasi-isometric embedding.
\end{corollary}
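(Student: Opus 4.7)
The plan is to exploit the identity $\widetilde{f}_{i,j}^{H_R} \circ g_{i,j}^{H_R} = p_{\theta(i,j,H_R)}$ from \cref{t:KW summary}. Because $g_{i,j}^{H_R}$ is a local quasi-isometry (not merely an embedding) it admits a local quasi-inverse, so on the relevant scale $\widetilde{f}_{i,j}^{H_R}$ factors essentially as $p_{\theta(i,j,H_R)}$ pre-composed with a quasi-inverse of $g_{i,j}^{H_R}$. Provided $p_{\theta(i,j,H_R)}$ is a global QI embedding with constants depending only on $n$, the composition will be a local QI embedding whose constants likewise depend only on $n$. The argument for $\widetilde{f}_{i,j}^{H_L}$ is identical.

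The first step is to apply \cref{lem:glue half-planes} with $\theta = \pi/n$, obtaining a constant $c := c_{\pi/n}$ such that $p_\eta$ is a $(1,c)$-quasi-isometric embedding for every $\eta \in [\pi/n, \pi]$. The angle bounds $\theta(i,j,H_R),\,\theta(i,j,H_L) \in (\pi/n,\pi)$ guaranteed by \cref{t:KW summary} then ensure that $p_{\theta(i,j,H_R)}$ and $p_{\theta(i,j,H_L)}$ are uniformly $(1,c)$-QI embeddings. I set $\lambda := 2$ and $\kappa := c+2$ (any constants with sufficient slack will do). Given $D>0$, I invoke \cref{t:KW summary} at a slightly larger scale $D' := 2D+1$, obtaining $C$, $\epsilon_0$, $R_1$; after shrinking $\epsilon_D \le \epsilon_0$ so that $1+C\epsilon \le \lambda$ and $C\epsilon \le 1$ for all $\epsilon \in (0,\epsilon_D)$, I take $R_D := R_1$.

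The core estimate is then elementary. For $w_1,w_2$ in the domain of $\widetilde{f}_{i,j}^{H_R}$ with $d(w_1,w_2)\le D$, use the quasi-surjectivity of $g_{i,j}^{H_R}$ on the $D'$-ball to find $z_1,z_2$ with $g_{i,j}^{H_R}(z_i) = w_i$ (any bounded-error perturbation is absorbed into $\kappa$). The local QI property gives $d(z_1,z_2)$ comparable to $d(w_1,w_2)$ up to multiplicative factor $1+C\epsilon$ and additive error $C\epsilon$, while the identity $\widetilde{f}_{i,j}^{H_R}(w_i) = p_{\theta(i,j,H_R)}(z_i)$ combined with the $(1,c)$-QI embedding property of $p_{\theta(i,j,H_R)}$ gives $d(\widetilde{f}_{i,j}^{H_R}(w_1),\widetilde{f}_{i,j}^{H_R}(w_2))$ within additive error $c$ of $d(z_1,z_2)$. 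Chaining these two comparisons delivers the $(\lambda,\kappa)$-QI embedding bound on every ball of radius $D$.

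The hard part is the bookkeeping in the preceding paragraph: one must choose $D'$ so that the preimages $z_1,z_2$ lie in a ball on which $g_{i,j}^{H_R}$ is still a $(1+C\epsilon,C\epsilon)$-QI, and the bounded perturbations from quasi-surjectivity must be absorbed cleanly into $\kappa$. Since $(1+C\epsilon)D + C\epsilon < 2D+1 = D'$ in the parameter range we consider, the choice $D' = 2D+1$ suffices comfortably, and all remaining estimates reduce to routine chaining of QI inequalities with no further geometric input required.
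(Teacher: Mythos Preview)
Your proposal is correct and follows exactly the route the paper intends: the paper states the corollary as an ``easy consequence of \cref{t:KW summary} and \cref{lem:glue half-planes}'' without further detail, and your argument is the natural expansion of that claim---use the angle bound $\theta \in (\pi/n,\pi)$ together with \cref{lem:glue half-planes} to get uniform $(1,c_{\pi/n})$ constants for $p_\theta$, then locally invert the $D$-local quasi-isometry $g_{i,j}^{H_R}$ to transfer these to $\widetilde{f}_{i,j}^{H_R}$. The only point worth tightening is the existence of exact preimages under $g_{i,j}^{H_R}$: in the Kahn--Wright framework the compliant map is in fact a homeomorphism, so preimages exist on the nose and the ``bounded-error perturbation'' caveat is unnecessary (and otherwise would need an a priori Lipschitz bound on $\widetilde{f}$ to absorb).
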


Now, choose $D, \lambda_1,\kappa_1$ so that any $D$--local $(\lambda,\kappa)$--quasi-isometric embedding from $\H^2$ to $\H^3$ is a global $(\lambda_1,\kappa_1)$--quasi-isometric embedding (see \cref{prop:local-to-global}).  This $D$ then gives $R_D$ and $\epsilon_D$ as above.  

\cref{lem:geodesic} proves that there is an $(n,R_D,\epsilon_D)$--pre-good curve $\alpha_0$, and the construction from \cite{KW} proves that there is an $f \co S_0 \to N$ with $\alpha = \alpha_0^n$ as a cuff satisfying the conclusions of \cref{t:KW summary} and \cref{c:local QI}, with $R= R_D$.

We fix this map $f \co S_0 \immerse N$, along with $n$, $D$, $R_D$, $\epsilon_D$, $\alpha_0$, $\alpha=\alpha_0^n$, $\kappa$, and $\epsilon$ as chosen above for the next two sections.

\section{The space $X_n$} \label{ss:Xn}
By standard separability properties of surface groups, we may find a cover $S \to S_0$ to which $\alpha$ lifts as a non-separating simple closed curve, and so that:

\begin{enumerate}
\item The injectivity radius of $S$ is at least $\max\{ 2D, \lambda_1\kappa_1 \}$; and
\item The lift of $\alpha$ to $S$ is contained in an embedded collar of width at least $\max \{ 2D, \lambda_1\kappa_1 \}$.
\end{enumerate}

Given the surface $S$, we build a space $X_n$ which immerses into $N$, exactly as in \cite{Sun-VHT}.  Passing from $S_0$ to $S$ before constructing $X_n$ makes the proof that $X_n$ is $\pi_1$--injective with quasi-convex image much simpler than Sun's proof from \cite[$\S4$]{Sun-VHT}.
Let $C$ denote the image of $\alpha$ in $S$, and let $\phi^n_C \co C \to \mathbb{S}^1$ be an $n$--to--$1$ covering map, and let $\tau_C \co C \to C$ be a deck transformation.  We may choose $\phi^n_C$ so that $\tau$ is an isometry.

\begin{definition}
The space $X_n(S,C)$ is defined by cutting $S$ along $C$ to get a surface $S_1$ with two boundary components, denoted $C_1$ and $C_2$, and taking the quotient of $S_1$ by the relation generated by $c \sim \tau_{C_i}(c)$ for $c \in C_i$ and $i = 1,2$.
\end{definition}

Suppose that $S$ is equipped with a hyperbolic metric, and consider the induced metric on $S_1$.  Since the maps $\tau_{C_i}$ are isometries, there is a natural induced quotient metric on $X_n(S,C)$, which is locally isometric to $\H^2$ away from the images of the $C_i$.

The following result is clear from the construction of $S$ from $S_0$.
\begin{lemma}
The injectivity radius of $X_n$ is at least $\max \{ 2D, \lambda_1\kappa_1 \}$.
\end{lemma}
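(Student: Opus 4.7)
Let $r = \max\{2D, \lambda_1\kappa_1\}$ and write $\pi\co S_1 \onto X_n$ for the quotient map. I would start by recording the local geometry of $X_n$. Away from the two ``spine'' curves $\pi(C_1), \pi(C_2)$, the map $\pi$ is a local isometry, so $X_n$ is locally isometric to the hyperbolic surface $S$. Along each spine $\pi(C_i)$, the local model is an $n$-page book: $n$ hyperbolic half-disks (half-collars of $C_i$ in $S_1$) glued along a common boundary arc, arising because the identification $c \sim \tau_{C_i}(c)$ collapses each $\mathbb{Z}/n$-orbit of points on $C_i$ to a single point while leaving the interior of the collar untouched.

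The goal is to show that every open ball $B_r(x) \subset X_n$ lifts isometrically to the universal cover $\widetilde X_n$, and I would handle this by a case split on $d(x, \pi(C_1) \cup \pi(C_2))$. When this distance is at least $r$, the ball $B_r(x)$ lies in the non-singular part of $X_n$ and so is isometric to a ball of radius $r$ in $S$ about $\pi^{-1}(x)$; property (1) of the cover $S$ then yields the isometric lift. When $x$ lies within distance $r$ of some spine $\pi(C_i)$, I would use property (2): the $r$-collar of $C_i$ inside $S_1$ is embedded, so the corresponding neighborhood of $\pi(C_i)$ in $X_n$ is isometric to the standard $n$-page book neighborhood, and $B_r(x)$ picks out the ball of radius $r$ in this book model, which lifts to $\widetilde X_n$ by construction.

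The main technical point is the second case: one must check that the $n$ half-collars meeting at the spine $\pi(C_i)$ really are pairwise disjoint except along the spine arc itself. This reduces to showing that the $\mathbb{Z}/n$-orbit of any point of $C_i$ consists of points pairwise separated by at least $2r$ along $C_i$, which in turn follows from the fact that consecutive orbit points are spaced by $\mathrm{length}(\alpha_0)$ and that $\mathrm{length}(\alpha_0) > 2r$ once $R$ is chosen large enough, as allowed by \cref{lem:geodesic} and \cref{t:KW summary}. With the collar embedding in hand and the orbit-spacing verified, the rest of the verification is a direct comparison between the book neighborhood in $X_n$ and the corresponding piece of $\widetilde X_n$.
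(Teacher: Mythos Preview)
The paper offers no proof beyond the remark that the result is ``clear from the construction of $S$ from $S_0$,'' so your case analysis (non-singular region versus $n$-page book neighborhoods of the spines) is exactly the expanded reasoning the paper suppresses, and it is correct. Your observation that one also needs $\ell(\alpha_0)>2r$ --- equivalently that $R$ be large relative to $n\max\{2D,\lambda_1\kappa_1\}$, since otherwise the spine curve $\overline{C_i}$ itself would be an essential loop of length $<2r$ --- is a genuine (and easily arranged) additional constraint that the paper leaves implicit.
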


Let $S_1$ be the surface obtained from $S$ by cutting along $C$, and let $C_1, C_2$ be the boundary components of $S_1$.  Let $q \co S_1 \to X_n$ be the defining quotient map and let $\overline{C_i} = q(C_i)$ for $i = 1,2$.  

Because in $S$ the curve $C$ has an embedded collar of width at least $2D$, for any $i,j \in \{ 1, 2\}$, any two distinct elevations of $\overline{C_i}$ and $\overline{C_j}$ to $\widetilde{X_n}$ are at distance at least $4D$ from each other.

\begin{definition}
Suppose that $\mc{A} = \{ Z_1, \ldots , Z_m \}$ is a finite collection of metric spaces and that $k > 0$.  A metric space $Z$ is {\em $k$--modeled on $\mc{A}$} if for every $z \in Z$ there is an $i$ so that the ball of radius $k$ about $z$ is isometric to a ball in $Z_i$.
\end{definition}

Recall $H_R = \{ z \mid \mathrm{Re}(z) \ge 0 \}$ is the (closed) half-hyperbolic plane (in the upper half-space model).  Let $W_n$ be the space obtained from $n$ copies of $H_R$ glued along the boundary geodesics (by an isometry).

\begin{lemma}
The space $\widetilde{X_n}$ is $D$--modeled on $W_n$.
\end{lemma}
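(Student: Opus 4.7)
The plan is to analyze the local geometry of $\widetilde{X_n}$ by combining the local structure of the quotient $X_n$ with the injectivity radius and geodesic separation bounds already established. The model collection is a single space $W_n$, so I need to show that every $D$-ball in $\widetilde{X_n}$ is isometric to some $D$-ball in $W_n$; the two types of balls in $W_n$ that arise are those centered on the spine geodesic (which look like $n$ half-disks glued along a diameter) and those avoiding the spine (which are ordinary hyperbolic disks).

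First, I would unpack the local geometry of $X_n$. The quotient map $q \co S_1 \to X_n$ is a local isometry on the interior of $S_1$, so $X_n$ is locally isometric to $\H^2$ away from $\overline{C_1} \cup \overline{C_2}$. Near a point of $\overline{C_i}$, the identifications generated by the isometric order-$n$ map $\tau_{C_i}$ glue $n$ hyperbolic half-disks (one at each of the $n$ preimages in $C_i$) along their common boundary diameter. This is exactly the local picture of $W_n$ around a spine point, so there is a natural local isometry from a small neighborhood of each point of $\overline{C_i}$ in $X_n$ onto a ball in $W_n$ centered on the spine.

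Next, for $\tilde{z} \in \widetilde{X_n}$, I would split on whether $B_D(\tilde{z})$ meets a lift $\tilde{\gamma}$ of $\overline{C_1} \cup \overline{C_2}$. If it does not, then since the injectivity radius of $X_n$ is at least $2D$, the ball $B_D(\tilde{z})$ is isometric to a hyperbolic $D$-ball, which in turn is isometric to an off-spine $D$-ball in $W_n$. If $B_D(\tilde{z})$ does meet some $\tilde{\gamma}$, then the $4D$ separation between distinct lifts implies that $\tilde{\gamma}$ is the unique such lift, and that $B_D(\tilde{z})$ lies in the $2D$-neighborhood of $\tilde{\gamma}$, which is disjoint from every other lift. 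The local isometries into $W_n$ constructed above then extend along an arc of $\tilde{\gamma}$ to an open neighborhood of that arc in $\widetilde{X_n}$, yielding the required isometry onto a ball in $W_n$.

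The main obstacle is the second case, namely ensuring that the local $W_n$-model patches around individual points of $\tilde{\gamma}$ assemble into a single embedded $W_n$-shaped neighborhood large enough to contain $B_D(\tilde{z})$. This reduces to checking that the gluings are consistent along the whole arc: the identifications of $X_n$ along $\overline{C_i}$ and the gluings defining $W_n$ are both tautological isometric identifications along boundary geodesics, so once a reference sheet is chosen they agree on the overlap. The injectivity radius lower bound, together with the $4D$ separation of lifts, prevents any wrap-around or collision on the scale of $2D$, so the assembled map is an embedding and its image contains $B_D(\tilde{z})$ as desired.
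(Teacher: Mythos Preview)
Your proposal is correct and follows essentially the same two-case split as the paper's proof: either the $D$--ball misses every lift of $\overline{C_1}\cup\overline{C_2}$ and is an ordinary hyperbolic ball, or it meets a (necessarily unique) lift and sits inside a $2D$--neighbourhood of that lift, which is modelled on $W_n$. The only cosmetic difference is that the paper, rather than ``patching local models along an arc'', simply picks a point $y$ on the nearby lift with $d(x,y)\le D$ and asserts that $B_{2D}(y)$ is isometric to a spine-centred $2D$--ball in $W_n$; your consistency-of-gluings paragraph is in effect a justification of that assertion, which the paper leaves implicit.
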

\begin{proof}
Let $x \in \widetilde{X_n}$ and consider the covering map $\pi \co \widetilde{X_n} \to X_n$.

{\bf Case 1:} $d(\pi(x),\{ \overline{C_1}, \overline{C_2} \} ) \le D$.

In this case, in $\widetilde{X_n}$ there is a unique elevation of some $\overline{C_i}$ which lies within $D$ of $x$.  Let $y$ be a point in this elevation so $d(x,y) \le D$.  Then $B_{D}(x) \subseteq B_{2D}(y)$, and $B_{2D}(y)$ is isometric to a ball of radius $2D$ in $W_n$.

{\bf Case 2:}  $d(\pi(x), \{ \overline{C_1}, \overline{C_2} \} ) > D$.

In this case there is no elevation of either $\overline{C_i}$ which lies within $D$ of $x$, and $B_{D}(x)$ is isometric to a ball of radius $D$ in $\H^2$ (and so in $W_n$).
\end{proof}

By construction the immersion $f_1 \co S \to N$ obtained from composing the covering map $S \to S_0$ with $f \co S_0 \looparrowright N$ yields an immersion $g \co X_n \looparrowright N$.  Let $\widetilde{g} \co \widetilde{X_n} \to \H^3$ be the induced map on universal covers.

Two points $x, y$ in $\widetilde{X_n}$ at distance at most $D$ either lie in an isometrically embedded copy of a half-space from $\H^2$, or else in two different ``sheets" of a copy of $W_n$.  In either case, it follows immediately from \cref{c:local QI} that $\widetilde{g}$ is a $D$--local $(\lambda,\kappa)$--quasi-isometric embedding.  Thus, 

\begin{theorem} \label{t:Xn QC}
The map $\widetilde{g} \co \widetilde{X_n} \to \H^3$ is a $D$--local $(\lambda,\kappa)$--quasi-isometric embedding, and hence is a (global) $(\lambda_1,\kappa_1)$--quasi-isometric embedding.

In particular, since the injectivity radius of $X_n$ is at least $\lambda_1\kappa_1$ the map $g$ is $\pi_1$--injective, and $g_\ast(\pi_1(X_n))$ is relatively quasi-convex in $\pi_1(N)$.
Moreover, $g_\ast(\pi_1(X_n))$ does not intersect any (conjugate of) the cusp subgroups of $\pi_1(N)$.
\end{theorem}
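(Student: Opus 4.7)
The first sentence of the theorem is essentially established in the paragraph immediately preceding it: two points of $\widetilde{X_n}$ within distance $D$ of each other lie either in an isometrically embedded half-plane of $\H^2$ or in two sheets of a copy of $W_n$ embedded in $\H^3$ via one of the maps $\widetilde{f_i}$, $\widetilde{f}_{i,j}^{H_R}$, or $\widetilde{f}_{i,j}^{H_L}$, and \cref{c:local QI} yields the $(\lambda,\kappa)$--quasi-isometric embedding on the relevant $D$--ball in either case. Since $\widetilde{X_n}$ is a geodesic metric space---it is the universal cover of a compact locally CAT$(-1)$ $2$--complex, the link at a pleating point being a bipartite graph on two vertices with $n\ge 2$ edges of length $\pi$ (hence of systole $2\pi$, so CAT$(1)$)---\cref{prop:local-to-global} applied with our preselected $D,\lambda_1,\kappa_1$ upgrades this to a global $(\lambda_1,\kappa_1)$--quasi-isometric embedding.

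For $\pi_1$--injectivity, I would argue by contradiction. If some non-trivial $\gamma\in\pi_1(X_n)$ satisfied $g_\ast(\gamma)=1$, then the $g_\ast$--equivariance of $\widetilde{g}$ would force $\widetilde{g}(\gamma\tilde{x})=\widetilde{g}(\tilde{x})$ for any basepoint $\tilde{x}$, and the quasi-isometry inequality would give $d_{\widetilde{X_n}}(\tilde{x},\gamma\tilde{x})\le \lambda_1\kappa_1$. This contradicts the injectivity radius lower bound of $\lambda_1\kappa_1$ on $X_n$, which forces $d_{\widetilde{X_n}}(\tilde{x},\gamma\tilde{x})\ge 2\lambda_1\kappa_1$ for every non-trivial deck transformation.

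For the remaining two claims I would show that $g_\ast(\pi_1(X_n))$ is in fact convex cocompact in $\mathrm{Isom}(\H^3)$. The image $\widetilde{g}(\widetilde{X_n})$ is quasi-convex in $\H^3$: given two points $p,q$ of the image, lift them to $\tilde{x},\tilde{y}\in\widetilde{X_n}$, pick any geodesic $[\tilde{x},\tilde{y}]$, and observe that $\widetilde{g}([\tilde{x},\tilde{y}])$ is a $(\lambda_1,\kappa_1)$--quasi-geodesic in $\H^3$ that fellow-travels the $\H^3$--geodesic $[p,q]$ by the Morse lemma. Since $X_n$ is compact, $g_\ast(\pi_1(X_n))$ acts cocompactly on this quasi-convex subset, hence is convex cocompact. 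Convex cocompact subgroups of Kleinian groups contain no parabolic elements (so intersect every cusp subgroup of $\pi_1(N)$ trivially) and are in particular relatively quasi-convex in $\pi_1(N)$ in the sense of Hruska. The only delicate step is the Morse-lemma quasi-convexity argument; once that is in hand, both remaining assertions follow from the standard Kleinian-group dictionary relating convex cocompactness, absence of parabolics, and relative quasi-convexity in the ambient cusped lattice.
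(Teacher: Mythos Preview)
Your proposal is correct and follows the paper's approach; in fact the paper gives no formal proof beyond the paragraph preceding the theorem statement and leaves the remaining assertions to the reader. You supply exactly the standard details the paper omits---the CAT($-1$) link condition making $\widetilde{X_n}$ a geodesic space so that \cref{prop:local-to-global} applies, the injectivity-radius contradiction for $\pi_1$--injectivity, and the convex-cocompactness route to relative quasi-convexity and the absence of parabolics.
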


\section{Virtual retractions and the proof of \cref{t:main} in the hyperbolic case} \label{s:main-hyp}

In this section we prove \cref{t:main} in case of a finite-volume hyperbolic $3$--manifold $N$.
Let $\Gamma = \pi_1(N)$.  By \cite[Theorem 1.1]{Agol} and \cite[Theorem 14.29]{Wise} $\pi_1(N_0)$ is the fundamental group of a virtually special cube complex $X$.  Let $\Gamma_1 \le \Gamma$ be a finite-index subgroup so that the cover of $X$ corresponding to $\Gamma_1$ is special, and let $N_1$ be the cover of $N$ corresponding to $\Gamma_1$.  As in \cref{ss:Xn}, construct an immersion $g \co X_n \to N_1$.  Note that $(\Gamma_1,\mc{P})$ is relatively hyperbolic where $\mc{P}$ consists of the (abelian) cusp subgroups.  

Let $H = g_\ast\left( \pi_1(X_n) \right) \le \pi_1(N_1) = \Gamma_1$.
The subgroup $H$ is relatively quasi-convex in $\Gamma_1$, and so by \cite[Corollary 6.7]{HW08} (with the formulation as in \cite[Theorem 6.3]{PW}) we have that $H$ is a virtual retract of $\Gamma_1$.
Let $\Gamma_2$ be a finite-index subgroup of $\Gamma$ which retracts onto $H$.
Let $N_2$ be the finite cover of $N_1$ corresponding to $\Gamma_2$. 
As in \cite[Proposition 3.7]{Sun-VHT}, we have the induced maps on homology
\[ H_1(X_n;\Z) \overset{g_*}\longrightarrow H_1(N_2;\Z) \overset{r_*}\longrightarrow H_1(X_n;\Z). \]
Therefore, since $r\circ g_*=id_H$, $H_1(X_n;\Z)=\Z / n\Z\oplus \Z^{k}$ is a direct factor of $H_1(N_2;\Z)$. In particular, $\Z / n\Z$ is a direct factor of $H_1(N_2;\Z)$ and this proves the hyperbolic case of  \cref{t:main} in the case that $A$ is finite cyclic.

Given a finite abelian group $A$, induction on the rank $k$ of $A$ also works as in \cite[Proposition 3.9]{Sun-VHT} as follows. Let $A=\oplus_{i=1}^{k-1}\Z/n_i\Z$ and $A'=A\oplus \Z/n_{k}\Z$.  Suppose by induction that $H \le \Gamma_1$ is a relatively quasi-convex free product of images of $\pi_1\left( X_{n_i} \right)$ (for $i = 1, \ldots , k-1$) and that $H'=(g_k)_*\left( \pi_1(X_{n_k})\right) \le \Gamma_1$.  Choose any $\gamma\in\Gamma_1$ whose fixed points in $\partial\H^3$ are disjoint from both limit sets $\Lambda(H)$ and $\Lambda(H')$. Then after conjugating $H'$ by some sufficiently high power $\gamma^m$, the first Klein--Maskit combination theorem \cite{MaskitIV} applies (note that by \cite[Corollary 1.3]{Hruska2010} a subgroup is relatively quasi-convex if and only if it is geometrically finite). Since $H'$ is relatively quasi-convex, the free product $H*gH'g^{-1}$ is also a relatively quasi-convex subgroup of $\Gamma_1$ isomorphic to the abstract group $H*H'$. 
The proof now follows exactly as above, completing the proof of \cref{t:main} in the finite-volume hyperbolic case.

\section{Non-hyperbolic manifolds} \label{s:mixed}

We now prove \cref{t:main} in general.  To that end, suppose that $M$ is an irreducible $3$--manifold which is not a graph manifold, and that $A$ is a finite abelian group.  By \cite[Theorem 1.1]{PW} there exists a \CAT$(0)$ cube complex $X$ equipped with a free $\pi_1(M)$--action so that there are finitely many orbits of hyperplanes and so that $\leftQ{X}{\pi_1(M)}$ has a finite special cover.  Let $\Gamma_1 \le \pi(M)$ be a finite-index subgroup corresponding to the finite special cover of $\leftQ{X}{\pi_1(M)}$, and let $M_1$ be the finite cover of $M$ corresponding to $\Gamma_1$.  
  Let $M_h$ be a hyperbolic piece in the geometric decomposition of $M_1$ and let $\Gamma_h := \pi_1(M_h) \le \pi_1(M_1)$ (basepoints/conjugacy classes are not important here).  According to the construction in the previous sections, there is a relatively quasi-convex subgroup $H$ of $\Gamma_h$ so that $A$ is a direct factor of $H^1(H;\Z)$.  \cref{t:main} follows immediately from the following result. This result is presumably known to the experts, but we were unable to find it in the literature.

\begin{proposition}
There is a finite-index subgroup $\Gamma_2 \le \Gamma_1$ so that $H \le \Gamma_h$ and $H$ is a retract of $\Gamma_2$.
\end{proposition}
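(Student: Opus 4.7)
I plan to derive the proposition by exhibiting $H$ as a relatively quasi-convex subgroup of $\Gamma_1$ with respect to a suitable relatively hyperbolic structure, and then invoking the same virtual retraction theorem already used in \cref{s:main-hyp}. Endow $\Gamma_1$ with the relatively hyperbolic structure $(\Gamma_1, \mathcal{P})$, where $\mathcal{P}$ consists of conjugacy representatives of the fundamental groups of the Seifert-fibered vertex spaces of the JSJ decomposition of $M_1$, together with the cusp subgroups of the hyperbolic pieces that are not identified with JSJ tori. That $(\Gamma_1, \mathcal{P})$ is relatively hyperbolic is a standard consequence of the work of Dahmani, Bowditch, and Drutu--Sapir on fundamental groups of $3$--manifolds with non-trivial geometric decomposition.

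Next, I would verify that the hyperbolic vertex group $\Gamma_h$ is \emph{full} relatively quasi-convex in $(\Gamma_1, \mathcal{P})$. Cocompactness of the action of $\Gamma_h$ on the appropriate subtree of the Bass--Serre tree of the JSJ decomposition of $M_1$ gives relative quasi-convexity, and fullness follows because every maximal infinite intersection $\Gamma_h \cap g P g^{-1}$ (with $P \in \mathcal{P}$, $g \in \Gamma_1$) is a conjugate inside $\Gamma_h$ of a peripheral subgroup of $\Gamma_h$ itself, in its intrinsic relatively hyperbolic structure whose peripherals are the cusps of $M_h$ (some of which may be absorbed into Seifert-fibered neighbors, and some of which remain honest cusps).

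By \cref{t:Xn QC}, $H = g_\ast(\pi_1(X_n))$ is relatively quasi-convex in $\Gamma_h$, and moreover $H$ intersects every conjugate of a peripheral subgroup of $\Gamma_h$ trivially. By transitivity of relative quasi-convexity (\cite{Hruska2010}, together with Mart\'{\i}nez-Pedroza's transitivity theorem), since $\Gamma_h$ is full relatively quasi-convex in $(\Gamma_1, \mathcal{P})$ and $H$ is relatively quasi-convex in $\Gamma_h$, it follows that $H$ is relatively quasi-convex in $(\Gamma_1, \mathcal{P})$. The Przytycki--Wise cubulation \cite{PW} realizes $\Gamma_1$ as virtually compact special in a sense appropriate for the relatively hyperbolic setting, so \cite[Corollary 6.7]{HW08} in the formulation of \cite[Theorem 6.3]{PW}, already applied in \cref{s:main-hyp}, yields a finite-index subgroup $\Gamma_2 \le \Gamma_1$ containing $H$ together with a retraction $\Gamma_2 \twoheadrightarrow H$, as required.

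The main obstacle I anticipate is the careful bookkeeping needed to produce a single relatively hyperbolic structure $\mathcal{P}$ on $\Gamma_1$ with respect to which simultaneously $\Gamma_h$ is full relatively quasi-convex, $H$ remains relatively quasi-convex after transit through $\Gamma_h$, and the Przytycki--Wise cubulation is ``compatible'' in the sense needed for the virtual retraction theorem to apply verbatim. The pieces are all in the literature, but matching the peripheral structure of $\Gamma_h$ coming from the cusped Kahn--Wright construction of the previous sections with the peripheral structure of $\Gamma_1$ coming from the JSJ decomposition -- particularly on those cusps of $M_h$ that are interior JSJ tori of $M_1$ -- must be checked explicitly.
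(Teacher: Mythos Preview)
Your strategy is genuinely different from the paper's, and the step you yourself flag as the obstacle is in fact a real gap. The paper does \emph{not} argue via a relatively hyperbolic structure on $\Gamma_1$; instead it works directly with the Przytycki--Wise cube complex $X$. Using the hemiwallspace construction of Hruska--Wise it produces a $\Gamma_h$--invariant convex subcomplex $X_h \subset X$ on which $\Gamma_h$ acts co-sparsely, and then applies Sageev--Wise to find an $H$--invariant convex subcomplex $Z \subset X_h \subset X$; because $H$ misses all parabolics of $\Gamma_h$, this $Z$ is $H$--\emph{cocompact}. That cocompact convex subcomplex is exactly the input to \cite[Corollary~6.7]{HW08} (as formulated in \cite[Theorem~6.3]{PW}), yielding the virtual retraction.

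Your route, by contrast, tries to reach the same endpoint by showing $H$ is relatively quasi-convex in $(\Gamma_1,\mathcal{P})$ with $\mathcal{P}$ containing the Seifert-fibered vertex groups, and then invoking the same retraction theorem. But that theorem, in the form used here, requires a cocompact convex subcomplex for $H$ in $X$; it is \emph{not} a black box taking ``relatively quasi-convex in some relatively hyperbolic structure'' to ``virtual retract''. In \cref{s:main-hyp} the bridge from relative quasi-convexity to a cocompact convex core is available because the peripherals are abelian cusp groups and the cubulation is tailored to that structure. In the mixed case the peripherals you introduce are Seifert-fibered (hence typically not virtually abelian), the $\Gamma_1$--action on $X$ is not cocompact, and no cited result lets you pass directly from relative quasi-convexity of $H$ in $(\Gamma_1,\mathcal{P})$ to an $H$--cocompact convex subcomplex of $X$. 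Establishing that is precisely the content of the paper's hemiwallspace/Sageev--Wise argument. (A secondary issue: your ``fullness'' claim for $\Gamma_h$ is stated as the intersections being peripheral in $\Gamma_h$, but fullness means finite index in a peripheral of the \emph{ambient} group; when the adjacent piece is a genuine Seifert-fibered vertex, the JSJ torus is not of finite index in it, so $\Gamma_h$ is not full in the usual sense.)
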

\begin{proof}
Let $\widetilde{M_h} \le \widetilde{M}$ be the ($\Gamma_h$--invariant) universal cover of $M_h$ inside the universal cover of $M$.  The space $X$ is built via a \emph{wallspace} construction  on $\widetilde{M}$ as in \cite{hruskawise:finiteness}.  As in \cite[$\S$2.9]{hruskawise:finiteness}, we can associate to $\widetilde{M_h}$ a \emph{hemiwallspace} consisting of those half-spaces in $\widetilde{M}$ which intersect $\widetilde{M_h}$.  This builds a $\Gamma_h$--invariant convex sub-complex $X_h$ of $X$ by \cite[Lemma 2.29]{hruskawise:finiteness}.

By \cite[Theorem 2.1]{PW}, the surfaces of the cubulation intersecting $M_h$ all intersect $M_h$ in a geometrically finite surface.  Therefore, by \cite[Theorem 6.12]{hruskawise:finiteness} the $\Gamma_h$--action on $X_h$ is (free and) \emph{co-sparse} (see \cite[Definition 7.1]{SageevWise15}).

According to \cite[Theorem 7.2]{SageevWise15} inside of $X_h$ there is an $H$--invariant convex sub-complex $Z$ upon which $H$ acts co-sparsely.  In fact, since $H$ does not intersect any of the parabolic subgroups of $\Gamma_h$, the sub-complex $Z$ found in \cite[Theorem 7.2]{SageevWise15} is $H$--cocompact (this follow immediately from the proof).

Since $\leftQ{X}{\Gamma_1}$ is special, it follows from \cite[Corollary 6.7]{HW08} (we use the formulation as in \cite[Theorem 6.3]{PW}) that $H$ is a virtual retract of $\Gamma_1$.
\end{proof}

\small
\bibliography{VirtualTorsionarxiv}
\bibliographystyle{alpha}

\end{document}